\newtheorem{theorem}{Theorem}[subsection]
\newtheorem{lemma}[theorem]{Lemma}
\newtheorem{corollary}[theorem]{Corollary}
\journal{arXiv}
\begin{document}

\begin{frontmatter}



\title{About Cantor Works on Trigonometric Series}


\author[ad1,sbu]{Muhammad-Ali A'rabi}
\author[ad3,sbu]{Farnaz Irani}

\address[ad1]{m\_aerabi@mehr.sharif.ir}
\address[ad3]{f.irani@modares.ac.ir}
\address[sbu]{Shahid Beheshti University \\ Daneshjoo st., Velenjak, Tehran, I.R.Iran}

\begin{abstract}
This paper is an investigation into Cantor works about representing a function with trigonometric series, and his proofs about its uniqueness. These works are important, because they cause invention of point-set topology, and foundation of basic ideas that led Cantor to his set theory.

\end{abstract}

\begin{keyword}
Cantor \sep trigonometric series \sep point-set topology


\end{keyword}

\end{frontmatter}


\section{Introduction}
\label{intro}

The essay on representing functions with trigonometric series written by Riemann, ``\"Uber die Darstellung einer Funktion durch eine trigonometrische Reihe" (1854), was published after his death in 1867 by Dedekind. Cantor was influenced in this work, and started to researching uniqueness of this representation. Cantor's essays in this area are:
\begin{itemize}
\item ``\"Uber einen die trigonometrischen Reihen betreffenden Lehrsatz" (1870)
\item ``Beweis, da\ss\ eine f\"ur jeden reellen Wert von $x$ durch eine trigonometrische Reihe gegebene Funktion $f(x)$ sich nur auf eine einzige Weise in dieser Form darstellen l\"a\ss t" (1870)
\item ``Notiz zu dem vorangehenden Aufsatze" (1871)
\item ``\"Uber trigonometrische Reihen" (1871)
\item ``\"Uber die Ausdehung eines Satzes aus der Theorie der trigonometrische Reihen" (1872)
\item ``Bemerkung \"uber trigonometrische Reihen" (1880)
\item ``Fernere Bemerkung \"uber trigonometrische Reihen" (1880)
\end{itemize}
He proved in his 1870a work that if $f(x)$ is represented by a trigonometric series convergent for all $x$, then the representation is unique. In his 1871 works he strengthened the result, proving that the uniqueness holds even if the series diverges at a finite number of points in any given interval. In his 1872 work he strengthened his result, even further. In this paper, will investigate in Cantor works of these 3 years.

\section{On Uniqueness of Representation}
\label{main}

In this paper, a non-negative integer is called a \emph{whole} integer. A positive real number is called a \emph{size}.

\subsection{\"Uber einen die trigonimetrischen Reihen betreffenden Lehrsatz}
\label{c1870a}

This essay consists of 4 sections of discusion, preceded two sections of conclusions, ``Erster Fall" and ``Zweiter Fall".
\par
\begin{lemma}
Let $\{n_k\}_{k \in \mathbb{N}}$ be an infinite sequence of whole positive integers, with property that $n_k > 2^k n_{k-1}$. Then, exists a size $\Omega$ with the property that for any $n \in \{n_k\}_{k \in \mathbb{N}}$,
$$n\Omega = 2 z_n + 1 \pm \Theta_n,$$
in which $z_n$ is some whole positive integer and $\lim_{n \rightarrow \infty}{\Theta_n} = 0$.
\end{lemma}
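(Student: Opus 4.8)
The plan is to realize $\Omega$ as the unique common point of a nested sequence of closed intervals, exactly in the spirit of the nested-interval (Cantor intersection) principle. The driving observation is that the map $\Omega \mapsto n_k\Omega$ multiplies lengths by $n_k$, so the hypothesis $n_k > 2^k n_{k-1}$ will guarantee that, passing from the interval controlled by $n_{k-1}$ to the one controlled by $n_k$, the available room expands fast enough to keep steering $n_k\Omega$ toward an odd integer while the permitted error shrinks to $0$.

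First I would fix target accuracies $\delta_k = 2^{-k}$, which tend to $0$ and will serve as the bounds on $\Theta_{n_k}$. I would then construct closed intervals $I_0 \supseteq I_1 \supseteq I_2 \supseteq \cdots$ inside $(0,\infty)$, together with odd positive integers $2z_{n_k}+1$, so that $|I_k| = 2\delta_k/n_k$ and $n_k\Omega \in [\,2z_{n_k}+1-\delta_k,\ 2z_{n_k}+1+\delta_k\,]$ for every $\Omega \in I_k$; that is, $I_k$ is the preimage under multiplication by $n_k$ of the $\delta_k$-neighborhood of the odd integer $2z_{n_k}+1$. Starting the recursion only requires choosing $I_0 \subset (0,\infty)$ with $n_1|I_0| \ge 4$ and bounded away from $0$. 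The inductive step is the heart of the argument: given $I_{k-1}$, its image $n_k I_{k-1}$ is an interval of length $n_k|I_{k-1}| = 2\delta_{k-1}\,n_k/n_{k-1} > 2\delta_{k-1}2^{k} = 4$, using $n_k/n_{k-1} > 2^k$ and $\delta_{k-1} = 2^{1-k}$. Deleting a margin of width $\delta_k \le 1/2$ from each end still leaves an interval of length $> 2$, which must contain an odd integer $2z_{n_k}+1$; pulling the $\delta_k$-neighborhood of that odd integer back by division by $n_k$ yields the required $I_k \subseteq I_{k-1}$, and keeping $I_0$ far from $0$ keeps every $z_{n_k}$ positive.

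Finally, the $I_k$ are nonempty, closed, bounded, and nested with $|I_k| \to 0$, so by the nested-interval theorem their intersection is a single size $\Omega$. Since $\Omega \in I_k$ for every $k$, we obtain $n_k\Omega = 2z_{n_k}+1 \pm \Theta_{n_k}$ with $|\Theta_{n_k}| \le \delta_k = 2^{-k}$, whence $\Theta_{n_k} \to 0$ as $k \to \infty$; this is precisely the claim for every $n \in \{n_k\}_{k\in\mathbb{N}}$. The step demanding the most care is the bookkeeping that couples the two roles of $\delta_k$: it must decrease to $0$, yet at each stage remain large enough (relative to $n_{k-1}/n_k$) that the stretched interval has length exceeding $2 + 2\delta_k$, leaving room for an odd integer together with its whole $\delta_k$-neighborhood. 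This balance is exactly what the factor $2^k$ in the hypothesis is there to supply.
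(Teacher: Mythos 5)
Your proof is correct, but it takes a genuinely different route from the paper's. The paper works ``bottom-up'': it greedily picks odd integers $2m_k+1$ to track the ratios $n_{k+1}/n_k$, forms the explicit rational approximants $q_k=(2m_{k-1}+1)/n_k$, proves this sequence is Cauchy via the telescoping bound $|q_{k-1}-q_k|\leqq 1/n_k$, and only then defines $\Omega$ as its limit, extracting the error bound $n_s\sum_{k>s}1/n_{k+1}<2^{-s}$ from the growth hypothesis after the fact. You work ``top-down'': you prescribe the error budget $\delta_k=2^{-k}$ in advance and maintain a nested sequence of closed intervals of candidates for $\Omega$, using the factor $n_k/n_{k-1}>2^k$ to show the stretched interval $n_kI_{k-1}$ is long enough (length $>4$, hence $>2$ after trimming margins of width $\delta_k\leqq 1/2$) to capture an odd integer together with its full $\delta_k$-neighborhood; $\Omega$ is then delivered by the nested-interval principle. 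Your bookkeeping checks out, including the remark that anchoring $I_0$ away from $0$ keeps every $z_{n_k}$ positive, a point the paper glosses over. Each approach buys something: the paper's version exhibits $\Omega$ as an explicit limit of rationals with transparent numerators, whereas yours makes the error control immediate by construction and, as a bonus, localizes $\Omega$ for free---the paper's subsequent lemma, that $\Omega$ can be placed in any prescribed interval, follows in your framework simply by choosing $I_0$ inside that interval (discarding finitely many initial $n_k$ if needed so that $n_1|I_0|\geqq 4$), which is essentially the same device the paper implements separately with its condition $\nu_1>6\nu$.
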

\par
\begin{proof}
We build a new sequence of odd whole integers $\{2m_k + 1\}_{k \in \mathbb{N}}$ based on $\{n_k\}_{k \in \mathbb{N}}$ with this rule:
$$\left | 2m_k + 1 - \frac{n_{k+1}}{n_k} \right | \leqq 1.$$
Obviously, there is exactly one integer that satisfies condition above. This property concludes that
$$\left | 2m_k + 1 - (2m_{k-1} + 1) \frac{n_{k+1}}{n_k} \right | \leqq 1,$$
for all $k \in \mathbb{N} \setminus \{1\}$. Now we construct a newer sequence $\{q_k\}_{k \in \mathbb{N}}$ of rational numbers $q_k$ in which
$$ q_1 = \frac{1}{n_1}, q_k = \frac{2m_{k-1} + 1}{n_k}.$$
Note that sequence $\{q_k\}_{k \in \mathbb{N}}$ approaches to a non-negative value as $k \rightarrow \infty$, because
$$\left | q_{k-1} - q_k \right | = \left | \frac{2m_{k-2} + 1}{n_{k-1}} - \frac{2m_{k-1} + 1}{n_k} \right | \leqq \frac{1}{n_k}.$$
Note that difference between $q_s$ and any other member $q_t \in \{q_k\}_{k \in \mathbb{N}}$ such that $t>s$, is not greater than 
$$\sum_{k=s}^{\infty} \frac{1}{n_{k+1}}.$$
This inequality guarantees convergence of the sequence. Let $\Omega = \lim_{k \rightarrow \infty}{q_k}$. So we have
$$\left | \Omega - q_s \right | = \left | \Omega - \frac{2m_{s-1}+1}{n_s} \right | \leqq \sum_{k=s+1}^{\infty} \frac{1}{n_{k+1}},$$
hence
$$\left | n_s \Omega - (2m_{s-1}+1) \right | \leqq n_s \sum_{k=s+1}^{\infty} \frac{1}{n_{k+1}} \lneqq \frac{1}{2^s}.$$
Let $z_{n_s} = m_{s-1}$, so we have 
$$n\Omega = 2 z_n + 1 \pm \Theta_n,$$
with $\Theta_{n_s} < 2^{-s}$, so $\lim_{n \rightarrow \infty}\Theta_n = 0$.
\end{proof}

\par
\begin{lemma}
We can modify the proof represented here, so that $\Omega$ lie in any given interval with real boundaries.
\end{lemma}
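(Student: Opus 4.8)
The plan is to take the proof of the previous lemma and introduce a degree of freedom at the base of the construction so that the limit $\Omega$ can be steered into any prescribed interval $(a,b)$ with $a,b \in \mathbb{R}$. The key observation is that the value of $\Omega = \lim_{k \to \infty} q_k$ is governed by the initial terms of the sequence $\{q_k\}$, since the tail contributions are bounded by $\sum_{k} 1/n_{k+1}$, which can be made arbitrarily small. First I would note that the defining recurrence $|2m_k + 1 - (2m_{k-1}+1)\frac{n_{k+1}}{n_k}| \leqq 1$ pins down each $m_k$ from its predecessor, so the only genuine freedom lies in the choice of the first odd integer $2m_0 + 1$ (equivalently, the starting point $q_1$ of the sequence). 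Everything downstream is then determined, and the estimate $|\Omega - q_s| \leqq \sum_{k=s+1}^\infty 1/n_{k+1}$ from the previous proof still holds verbatim.

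Concretely, I would proceed as follows. Given a target interval $(a,b)$, I would replace the rigid initialization $q_1 = 1/n_1$ with $q_1 = (2m_0 + 1)/n_1$ for a suitably chosen whole integer $m_0$. Since the admissible values of $q_1$ of this form are spaced $2/n_1$ apart, and since by the hypothesis $n_k > 2^k n_{k-1}$ one may pass to a subsequence (or discard early terms) to make $n_1$ as large as desired, the grid of candidate starting values becomes arbitrarily fine. Hence I can select $m_0$ so that $q_1$ lies comfortably inside $(a,b)$, say within distance $(b-a)/4$ of its midpoint. The recurrence then propagates this choice forward, and by the telescoping bound the final limit $\Omega$ differs from $q_1$ by at most $\sum_{k=1}^\infty 1/n_{k+1}$.

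The main step is then a quantitative control argument: I would show that, after possibly thinning the sequence $\{n_k\}$ to enforce $\sum_{k=1}^\infty 1/n_{k+1} < (b-a)/4$, the limit $\Omega$ is trapped in $(a,b)$. This uses only the geometric-type decay already exploited in the prior lemma, since $n_{k+1} > 2^{k+1} n_k \geqq 2^{k+1} n_1$ forces the tail sum below any prescribed threshold once $n_1$ is large. The conclusion $n\Omega = 2z_n + 1 \pm \Theta_n$ with $\Theta_n \to 0$ is unaffected, because the asymptotic estimate $\Theta_{n_s} < 2^{-s}$ depends only on the growth condition on $\{n_k\}$ and not on the starting value.

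The part I expect to require the most care is the interaction between two competing requirements on $n_1$: it must be large enough both to make the candidate grid $\{(2m_0+1)/n_1\}$ fine relative to $b-a$ \emph{and} to force the tail sum below $(b-a)/4$. The hard part will be verifying that the growth hypothesis $n_k > 2^k n_{k-1}$ is genuinely preserved under the thinning and re-indexing — one must check that selecting a subsequence of a sequence satisfying this condition yields a sequence still satisfying it (which it does, since the gaps only widen), so that the previous lemma applies unchanged to the modified data.
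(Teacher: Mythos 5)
Your proposal is correct and follows essentially the same route as the paper: start the construction at a term $n_1$ (the paper's $\nu_1 > 6\nu$) large enough that the odd fractions $(2m_0+1)/n_1$ form a grid finer than the target interval, place the initial term with a safety margin in its interior (the paper uses the middle third of $[(\mu-1)/\nu,\mu/\nu]$ where you use the midpoint $\pm(b-a)/4$), and then invoke the telescoping tail bound $\sum_k 1/n_{k+1}$ to keep $\Omega$ trapped. Your explicit check that the growth condition $n_k > 2^k n_{k-1}$ survives thinning is a detail the paper leaves implicit, but it does not change the argument.
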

\begin{proof}
If we divide interval $[0,2]$ into $2\nu$ equal intervals, supose that we want $\Omega$ to lie in $\mu$th of them. Let $\nu_1$ be the first member of $\{n_k\}_{k \in \mathbb{N}}$ satisfying the condition $\nu_1 > 6\nu$, so exists one specific odd integer $2\mu_1 + 1$ so that
$$\frac{2\mu_1 + 1}{\nu_1} \in \left [ \frac{3\mu - 2}{3\nu}, \frac{3\mu - 1}{3\nu} \right ].$$
We can find integers $\nu_k \in \{n_k\}_{k \in \mathbb{N}}$ and $\mu_k$ the same way, so that
$$\left | 2\mu_k + 1 - (2\mu_{k-1} + 1)\frac{\nu_{k}}{\nu_{k-1}} \right | \leqq 1,$$
hence
$$\left | \frac{2\mu_{k-1} + 1}{\nu_{k-1}} - \frac{2\mu_{k} + 1}{\nu_{k}} \right | \leqq \frac{1}{\nu_k}.$$
Let $\xi_k = (2\mu_k + 1)/\nu_k$, and $\Omega = \lim_{k \rightarrow \infty} \xi_k$. Now with $z_{\nu_k} = \mu_k$, last proof is still correct, and $\Omega$ lies in $[(\mu-1)/\nu, \mu/\nu]$.
\end{proof}

\par
\begin{lemma}
Let $\{\varrho_n\}_{n \in \mathbb{N}}$ be a sequence of sizes, with property that in every subsequence $\{\varrho_{n_k}\}_{k \in \mathbb{N}}$ of it, exists one member lesser than arbitrary size $\delta$. Then $\lim_{n \rightarrow \infty} \varrho_n = 0$.
\end{lemma}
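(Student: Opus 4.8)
The plan is to argue by contradiction, exploiting the elementary fact that failure of convergence to $0$ forces the existence of a subsequence bounded away from $0$, which is exactly what the hypothesis forbids. First I would suppose, for the sake of contradiction, that $\lim_{n \to \infty} \varrho_n \neq 0$. Since each $\varrho_n$ is a size, hence positive, negating the definition of the limit yields a size $\epsilon > 0$ such that the inequality $\varrho_n \geq \epsilon$ holds for infinitely many indices $n$.

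Next I would collect precisely those indices into a strictly increasing sequence $\{n_k\}_{k \in \mathbb{N}}$, producing a subsequence $\{\varrho_{n_k}\}_{k \in \mathbb{N}}$ every one of whose members satisfies $\varrho_{n_k} \geq \epsilon$. At this point I invoke the hypothesis for this particular subsequence with the choice $\delta = \epsilon$: by assumption the subsequence must contain some member strictly less than $\epsilon$. This contradicts the fact that no member of $\{\varrho_{n_k}\}_{k \in \mathbb{N}}$ lies below $\epsilon$. Therefore the assumption is untenable, and $\lim_{n \to \infty} \varrho_n = 0$.

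The only delicate point, and it is one of reading rather than of technique, is the correct parsing of the quantifiers in the hypothesis: the clause asserting the existence of a member smaller than $\delta$ must be understood to hold for \emph{every} subsequence and \emph{every} size $\delta$ at once. With the universal reading fixed, the argument is immediate; indeed, the positivity of the $\varrho_n$ then guarantees that $0$ is the only value the limit could take. I expect no genuine obstacle beyond making these quantifiers explicit, so the proof should reduce to the single contradiction sketched above.
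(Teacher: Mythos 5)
Your proof is correct and rests on the same key observation as the paper's: if infinitely many terms satisfied $\varrho_n \geq \epsilon$, they would form a subsequence with no member below $\delta = \epsilon$, contradicting the hypothesis. The paper merely packages this as an iterated deletion procedure over a sequence of thresholds $\Delta_k \to 0$, whereas you run the single contradiction directly; the logical content is identical.
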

\begin{proof}
Let $\{\Delta_n\}_{n \in \mathbb{N}}$ be a strictly decreasing sequence of sizes converging zero, for example $\{\frac{1}{n}\}_{n \in \mathbb{N}}$. Delete from the $\{\varrho_n\}_{n \in \mathbb{N}}$ members bigger than $\Delta_1$, then delete members bigger than $\Delta_2$, and continue this act. With none of these operations we deleted infinitely many members, otherwise, deleted members form a subsequence of $\{\varrho_n\}_{n \in \mathbb{N}}$ with no members smaller than some $\Delta_k$. So we have a last deleted member $\varrho_{n_k}$ in deleting operation for $\Delta_k$, in which every member after that is smaller than $\Delta_k$. So $\lim_{n \rightarrow \infty} \varrho_n = 0$.
\end{proof}

\par
\begin{corollary}
Let $\{\varrho_n\}_{n \in \mathbb{N}}$ be a sequence of sizes, with property that every subsequence $\{\varrho_{n_k}\}_{k \in \mathbb{N}}$ of it, itslef has a subsequence converging zero. Then $\lim_{n \rightarrow \infty} \varrho_n = 0$.
\end{corollary}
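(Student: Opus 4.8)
The plan is to reduce the corollary to the preceding lemma by showing that the hypothesis of the corollary implies the hypothesis of that lemma. Recall that the lemma requires that every subsequence $\{\varrho_{n_k}\}_{k \in \mathbb{N}}$ contain, for each prescribed size $\delta$, at least one member smaller than $\delta$; the corollary instead assumes the formally stronger statement that every subsequence possesses a further subsequence converging to zero. So the task is to extract the lemma's weaker condition from the corollary's stronger one, after which the lemma does the rest.

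First I would fix an arbitrary subsequence $\{\varrho_{n_k}\}_{k \in \mathbb{N}}$ together with an arbitrary size $\delta$. By the hypothesis of the corollary, this subsequence admits a further subsequence $\{\varrho_{n_{k_j}}\}_{j \in \mathbb{N}}$ with $\lim_{j \rightarrow \infty} \varrho_{n_{k_j}} = 0$. By the very definition of convergence to zero, there is then an index $j$ for which $\varrho_{n_{k_j}} < \delta$.

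Next I would observe that every term of the further subsequence $\{\varrho_{n_{k_j}}\}_{j \in \mathbb{N}}$ is itself a term of the original subsequence $\{\varrho_{n_k}\}_{k \in \mathbb{N}}$, since a subsequence of a subsequence is obtained only by discarding terms. Hence the member $\varrho_{n_{k_j}}$ located above is a member of $\{\varrho_{n_k}\}_{k \in \mathbb{N}}$ smaller than $\delta$. As the subsequence $\{\varrho_{n_k}\}_{k \in \mathbb{N}}$ and the size $\delta$ were arbitrary, this establishes precisely the hypothesis of the previous lemma, and applying that lemma gives $\lim_{n \rightarrow \infty} \varrho_n = 0$.

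The argument is short, and the only point needing care is the direction of the implication: the burden is to see that convergence of a sub-subsequence guarantees, below any threshold $\delta$, an \emph{actual member} of the original subsequence. Since this is exactly what the lemma consumes as input, no estimate beyond bookkeeping of indices is required, and I expect no substantial obstacle.
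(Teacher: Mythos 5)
Your argument is correct and is exactly the intended derivation: the paper states this as an unproved corollary of the preceding lemma, and your reduction---extracting, for each subsequence and each size $\delta$, a member below $\delta$ from the convergent sub-subsequence---is precisely the step that makes it a corollary. Nothing further is needed.
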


\par
\begin{theorem}
\emph{(Lehrsatz)} When for each real value of $x$ between given boundaries $(a<x<b)$,
$$\lim_{n \rightarrow \infty} (a_n \sin{nx}+b_n \cos{nx})=0,$$
then
$$\lim_{n \rightarrow \infty} a_n=0, \mathrm{\ and\ } \lim_{n \rightarrow \infty} b_n=0.$$
\end{theorem}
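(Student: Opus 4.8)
The plan is to prove the equivalent statement $\varrho_n \to 0$, where $\varrho_n = \sqrt{a_n^2+b_n^2}$; indeed $\varrho_n\to0$ holds if and only if $a_n\to0$ and $b_n\to0$ simultaneously. Writing $a_n=\varrho_n\cos\alpha_n$ and $b_n=\varrho_n\sin\alpha_n$, the hypothesis becomes $\varrho_n\sin(nx+\alpha_n)\to0$ for every $x\in(a,b)$. The difficulty is that the phase $\alpha_n$ rotates uncontrollably with $n$, so no single fixed $x$ isolates $a_n$ or $b_n$; the Diophantine lemmas above are precisely the tool for defeating this rotation.

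Suppose, for contradiction, that $\varrho_n\not\to0$. By the contrapositive of the subsequence criterion proved above there are a size $\delta$ and a subsequence with $\varrho_{n_k}\ge\delta$ for all $k$. Thinning further, I may assume $n_k>2^k n_{k-1}$, so that both $\{n_k\}$ and $\{2n_k\}$ satisfy the growth hypothesis of the Diophantine lemma. Dividing $\varrho_{n_k}\sin(n_kx+\alpha_{n_k})\to0$ by $\varrho_{n_k}\ge\delta$ then yields the clean fact $\sin(n_kx+\alpha_{n_k})\to0$ for every $x\in(a,b)$, with the amplitudes now out of the way.

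Next I would manufacture two test points inside $(a,b)$ using the real-boundary refinement of the lemma. Applying it on the interval $(a/\pi,b/\pi)$ to the sequence $\{n_k\}$ produces $\omega$ with $n_k\omega=2z_{k}+1\pm\Theta_{n_k}$; setting $\Omega=\pi\omega\in(a,b)$ gives $\sin n_k\Omega\to0$ and $\cos n_k\Omega\to-1$. Expanding $\sin(n_k\Omega+\alpha_{n_k})=\sin n_k\Omega\cos\alpha_{n_k}+\cos n_k\Omega\sin\alpha_{n_k}$ and letting $k\to\infty$ forces $\sin\alpha_{n_k}\to0$. Applying the lemma instead to $\{2n_k\}$ produces $\omega'$ with $2n_k\omega'$ within $o(1)$ of an odd integer; setting $\Omega'=\pi\omega'\in(a,b)$ then places $n_k\Omega'$ within $o(1)$ of a half-odd multiple of $\pi$, so that $\cos n_k\Omega'\to0$ and $|\sin n_k\Omega'|\to1$. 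The same expansion at $\Omega'$ now forces $\cos\alpha_{n_k}\to0$.

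The two conclusions give $\sin^2\alpha_{n_k}+\cos^2\alpha_{n_k}\to0$, contradicting the identity $\sin^2+\cos^2=1$. Hence no such subsequence can exist, so $\varrho_n\to0$, and therefore $a_n\to0$ and $b_n\to0$. I expect the only genuine obstacle to be the one already flagged—the rotating phase $\alpha_n$—and the crux of the whole argument is the doubling trick $\{n_k\}\rightsquigarrow\{2n_k\}$, which lets a single lemma place $n_k\Omega$ near an odd multiple of $\pi$ at one point and near a half-odd multiple of $\pi$ at another, thereby pinning $\sin\alpha_{n_k}$ and $\cos\alpha_{n_k}$ to zero independently.
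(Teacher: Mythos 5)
Your proof is correct and follows exactly the route the paper sets up: the amplitude--phase reduction to $\varrho_n$ and a rotating angle is the paper's own (and only) stated step, and your use of the subsequence criterion, the Diophantine lemma, and its interval refinement is precisely what those three preceding lemmas were proved for. In fact your argument --- in particular the doubling trick $\{n_k\}\rightsquigarrow\{2n_k\}$, which pins down $\cos\alpha_{n_k}$ as well as $\sin\alpha_{n_k}$ --- supplies the completion that the paper's proof, which breaks off after its first sentence, omits.
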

\begin{proof}
We will transform $a_n \sin{nx} + b_n \cos{nx}$ into form $\varrho_n cos{(\varphi_n - nx)}$, in which $\varrho_n = \sqrt{a^2_n + b^2_n}$, and $\varphi_n \in [0, 2\pi]$.
\end{proof}

\section{Notes on Cantor Works}
\label{notes}

\subsection{Language}
\label{lang}
Language Cantor used in his works is different with nowaday scientific writing ethics. First difference, of course, is because he writes in German, but it is not all. Away from order in sentences, he sometimes gives details about things obvious to us, and sometimes leaves an amibigous thing without any further comments. It is also a little complicated to follow things. Sometimes you won't know the proposition he mentioned is proved, or he will give a proof later.
\par
To comment on some of his/German terminology, let us begin with ``Reihe". This word means both ``sequence" and ``series", for example in ``trigonometri-sche Reihe" and ``Gr\"o\ss enreihe". ``Gr\"o\ss e" itself, literally ``size", means a non-negative real value. ``Zahl", literally ``number" (as in ``Zahlengr\"o\ss e" that is equivalent to ``Gr\"o\ss e"), means ``integer". As adjectives, ``reellen" for is used for indicating a number a real (e.g. ``reellen Gr\"o\ss engebietes"), ``ganzen" to indicate a number is non-negative (e.g. ``positiven ganzen Zahlen"), etc. The adjective ``ganzen", literally ``whole", sometimes used in English. Example is the set of whole numbers $\mathbb{W} = \mathbb{N} \cup \{0\}$.

\subsection{Notation}
\label{notation}
Notation used by Cantor is also so different with standard notation we use in our scientific literature. Actually, notation we use now was introduced in Hilbert's ``Grundlagen der Mathematik" in 1930s.
\par
As an example, Cantor had prefered $a, b, c, ...$, $u, v, w, x, ...$, and most important $a, a', a'', ..., a^{(n)}, ...$, instead of using indices. As one reason maybe publishers prefered these ways because these methodes were easier to typeset, and them became popular through mathematical publications. By the way, Cantor had prefered to use $a^{(n)}$ instead of $a_n$, when he had to address $n$th term of a sequence. So, just the same, in his work of 1882, he designate set of limit points of a set $A$ with $A'$, to construct his sequence of sets. The reason set of limit points of a set $A$ is designated by $A'$ in nowaday analysis standard notation, is this. Maybe if Cantor had used set of boundary points instead for his proof, we were now using $A'$ instead of $\partial{A}$.

\bibliographystyle{elsarticle-harv}



\end{document}